\documentclass[12pt, oneside, reqno]{article}
\usepackage{amsmath,amsthm}
\usepackage{amssymb,latexsym}
\usepackage{enumerate}
\usepackage{cases}

\usepackage{tikz}
\newtheorem{thm}{Theorem}[section]

\newtheorem{lem}[thm]{Lemma}


\theoremstyle{definition}



\numberwithin{equation}{section}


\frenchspacing

\textwidth=14.5cm \textheight=22cm
\parindent=16pt
\oddsidemargin=0.7cm \evensidemargin=-0.5cm \topmargin=-0.5cm



\begin{document}

\title{\Large On a subset sums problem of Chen and Wu
}\author{\large Min Tang\thanks{Corresponding author. This work was supported by the National Natural Science Foundation of China(Grant
No. 11971033) and top talents project of Anhui Department of Education(Grant No. gxbjZD05).} and Hongwei Xu}
\date{} \maketitle
 \vskip -3cm
\begin{center}
\vskip -1cm { \small
\begin{center}
 School of Mathematics and Statistics, Anhui Normal
University
\end{center}
\begin{center}
Wuhu 241002, PR China
\end{center}}
\end{center}

  {\bf Abstract:} For a set $A$, let $P(A)$ be the set of all finite subset sums of $A$. We prove that if a sequence $B=\{11\leq b_1<b_2<\cdots\}$ satisfies  $b_2=3b_1+5$, $b_3=3b_2+2$ and $b_{n+1}=3b_n+4b_{n-1}$ for all $n\geq 3$, then there is a sequence of positive integers $A=\{a_1<a_2<\cdots\}$ such that
 $P(A)=\mathbb{N}\setminus B$. This result shows that the answer to the problem of Chen and Wu [`The inverse problem on subset sums', European. J. Combin. 34(2013), 841-845] is negative.

{\bf Keywords:} subsetsum; completement; representation problem

2020 Mathematics Subject Classification: 11B13\vskip8mm

\section{Introduction}
Let $\mathbb{N}$ be the set of all nonnegative integers. For a sequence of integers $A=\{a_1<a_2<\cdots\}$, let
$$P(A)=\left\{\sum \varepsilon_ia_i: a_i\in A, \varepsilon_i=0\text{ or }1, \sum \varepsilon_i<\infty\right\}.$$
Here $0\in P(A)$.

 In 1970, S. A. Burr \cite{Burr} asked the following question: which subsets $S$ of $\mathbb{N}$ are equal to $P(A)$ for some $A$?
Burr showed the following result (unpublished):

\noindent{\bf Theorem A} (\cite{Burr}).
{\it Let $B=\{4\leq b_1<b_2<\cdots\}$ be a sequence of integers for which $b_{n+1}\geq b_n^2$ for $n=1,2,\ldots$. Then there exists $A=\{a_1<a_2<\cdots\}$ for which
 $P(A)=\mathbb{N}\setminus B$.}

 Burr \cite{Burr} ever mentioned that if $B$ grows sufficiently rapidly, then there exists a sequence $A$ such that  $P(A)=\mathbb{N}\setminus B$.
 It is natural to ask how slow can sequence $B$ grow. In 1996, Hegyv\'{a}ri \cite{Hegy} improved Burr's result:

 \noindent{\bf Theorem B} (\cite{Hegy}, Theorem 1).
{\it Let $B=\{7\leq b_1<b_2<\cdots\}$ be a sequence of integers. Suppose that for every $n$, $b_{n+1}\geq 5b_n$. Then there exists a sequence of integers $A=\{a_1<a_2<\cdots\}$ for which
 $P(A)=\mathbb{N}\setminus B$.}

In 2012, Chen and Fang \cite{chen2012} precisely extended Hegyv\'{a}ri's result related to Burr¡¯s question:

\noindent{\bf Theorem C} (\cite{chen2012}, Theorem 1).
{\it Let $B=\{b_1<b_2<\cdots\}$ be a sequence of integers with $b_1\in\{4,7,8\}\cup \{b: b\geq 11, b\in \mathbb{N}\}$ and $b_{n+1}\geq 3b_n+5$ for all $n\geq 1$. Then there exists a sequence of positive integers $A=\{a_1<a_2<\cdots\}$ for which
 $P(A)=\mathbb{N}\setminus B$.}

\noindent{\bf Theorem D} (\cite{chen2012}, Theorem 2).
{\it Let $B=\{b_1<b_2<\cdots\}$ be a sequence of positive integers with $b_1\in\{3,5,6,9,10\}$ or $b_2=3b_1+4$ or $b_1=1$, $b_2=9$ or $b_1=2$, $b_2=15$. Then there is no sequence of positive integers $A=\{a_1<a_2<\cdots\}$ for which
 $P(A)=\mathbb{N}\setminus B$.}

 In 2013, Chen and Wu \cite{chen2013} further improved Theorem C.

 \noindent{\bf Theorem E} (\cite{chen2013}, Theorem 1).
{\it If $B=\{b_1<b_2<\cdots\}$ is a sequence of integers with $b_1\in\{4,7,8\}\cup \{b: b\geq 11, b\in \mathbb{N}\}$, $b_2\geq 3b_1+5$, $b_3\geq 3b_2+3$ and $b_{n+1}> 3b_n-b_{n-2}$ for all $n\geq 3$, then there exists a sequence of positive integers $A=\{a_1<a_2<\cdots\}$ such that
 $P(A)=\mathbb{N}\setminus B$ and
 $$P(A_s)=[0,2b_s]\setminus \{b_1,\ldots, b_s, 2b_s-b_{s-1}, \ldots,2b_s-b_1\},$$
 where $A_s=A\cap [0, b_s-b_{s-1}]$ for all $s\geq 2$.}

 Moreover, Chen and Wu \cite{chen2013} posed the following problem:

  \noindent{\bf Problem 1} (\cite{chen2013}, Problem 1). {\it Let $B=\{b_1<b_2<\cdots\}$ be a sequence of positive integers. Let $d_1=10$, $d_2=3b_1+4$, $d_3=3b_2+2$ and $d_{n+1}=3b_n-b_{n-2}(n\geq 3)$. If $b_m=d_m$ for some $m\geq 3$ and $b_n>d_n$ for all $n\neq m$. Is it true there is no sequence of positive integers $A=\{a_1<a_2<\cdots\}$ with $P(A)=\mathbb{N}\setminus B$?}

In 2013, Wu \cite{Wu2013} gave a segment version of Theorem E. Recently, Fang and Fang \cite{Fang2019} determined the critical value for $b_3$ such that there exists an infinite sequence of positive integers $A$ for which
 $P(A)=\mathbb{N}\setminus B$(under the condition $b_1>1$ and $b_2=3b_1+5$).

 In this paper, we obtain the following results:

\begin{thm}\label{thm0} Let $B=\{11\leq b_1<b_2<\cdots\}$ be a sequence of integers with $b_2=3b_1+5$, $b_3=3b_2+2$ and $b_{n+1}=3b_n+4b_{n-1}$ for all $n\geq 3$. Then there exists a sequence of positive integers $A=\{a_1<a_2<\cdots\}$ such that
 $P(A)=\mathbb{N}\setminus B$.
\end{thm}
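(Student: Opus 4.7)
The plan is to construct $A$ inductively while preserving the same invariant as in Theorem E: writing $A_s:=A\cap[1,b_s-b_{s-1}]$, I would maintain, for all $s\geq 2$, the identity
$$P(A_s)=[0,2b_s]\setminus\{b_1,\ldots,b_s,\,2b_s-b_{s-1},\ldots,2b_s-b_1\}.$$
Granted this invariant, $P(A)=\mathbb{N}\setminus B$ follows by a standard limit argument: each $n\geq 0$ lies in $[0,2b_s]$ for all large $s$, and a short arithmetic check using the recurrence shows that no value $2b_s-b_i$ with $i<s$ is equal to any $b_j$, while for large $t>s$ the block $\{2b_t-b_{t-1},\ldots,2b_t-b_1\}$ sits far above $2b_s-b_i$; hence each symmetric exclusion eventually enters $P(A_t)\subseteq P(A)$.

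The base step $s=2$ can be handled by an explicit construction of $A_2\subseteq[1,b_2-b_1]$ satisfying $P(A_2)=[0,2b_2]\setminus\{b_1,b_2,2b_2-b_1\}$. Since $b_2=3b_1+5$ and $b_1\geq 11$, this falls squarely within the scope of the base construction of Theorem E, so no new idea is required here. The inductive step $s\to s+1$ for $s\geq 3$ is likewise relatively routine: our hypothesis $b_{s+1}=3b_s+4b_{s-1}$ is \emph{far} more generous than the threshold $b_{s+1}>3b_s-b_{s-2}$ used by Theorem E, so placing new generators in the interval $(b_s-b_{s-1},\,b_{s+1}-b_s]$ leaves ample slack to cover $(2b_s,2b_{s+1}]$ while missing exactly the required values. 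I would follow Theorem E's template, splitting the new range into segments reached from $P(A_s)$ by adjoining one, two, or three new generators and checking each endpoint.

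The main obstacle is the transition $s=2\to s=3$, which is precisely where our theorem departs from Theorem E. Because $b_3=3b_2+2$ lies one below the threshold $b_3\geq 3b_2+3$ of Theorem E, the standard construction leaves no room and must be replaced by a tailor-made choice of generators inside $(b_2-b_1,\,2b_2+2]$. Two tight requirements must be met simultaneously: the previously forbidden value $2b_2-b_1$ has to be realized as a new subset sum (since $2b_2-b_1\notin B$), while the union of $P(A_2)$ with the new subset sums must cover $(2b_2,2b_3]$ exactly, missing only $b_3,\,2b_3-b_2,\,2b_3-b_1$. Because the margin is zero, this step is the technical heart of the argument; I expect it to require partitioning $(2b_2,2b_3]$ into short blocks and verifying by direct computation that each new generator (or a small combination of generators) hits precisely the intended subset. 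Once $A_3$ is in hand, the rest of the induction proceeds as above.
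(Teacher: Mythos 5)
Your plan hinges on maintaining the Theorem E invariant $P(A_s)=[0,2b_s]\setminus\{b_1,\ldots,b_s,\,2b_s-b_{s-1},\ldots,2b_s-b_1\}$ with $A_s=A\cap[1,b_s-b_{s-1}]$ for all $s\geq 2$, and you correctly single out the step $s=2\to s=3$ as the crux but leave it as a computation you ``expect'' to work. It does not: that invariant is provably unattainable at $s=3$ under the hypotheses $b_2=3b_1+5$, $b_3=3b_2+2$, which is precisely why the paper abandons it. Here is the obstruction. The invariant at $s=2$ forces $\sigma(A_2)=2b_2$ and $P(A_2)=[0,2b_2]\setminus\{b_1,b_2,2b_2-b_1\}$; the invariant at $s=3$ forces $\sigma(A_3)=2b_3$, so the new elements of $A_3\setminus A_2$ lie in $(b_2-b_1,\,b_3-b_2]=(2b_1+5,\,6b_1+12]$ and sum to $2b_3-2b_2=4b_2+4=12b_1+24$. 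Since $b_3\notin P(A_3)$, every nonempty subset $S$ of the new elements must satisfy $b_3-\sigma(S)\notin P(A_2)$. For singletons this forces each new element to lie in $[2b_1+6,\,3b_1+6]$ or to equal $b_2+b_1+2=4b_1+7$ or $2b_2+2=6b_1+12$. For a pair $c_1,c_2$ both in $[2b_1+6,3b_1+6]$ one gets $b_3-c_1-c_2\in[3b_1+6,\,5b_1+4]$, an interval contained in $P(A_2)$, a contradiction; so at most one new element is ``small''. The total of the new elements is then at most $(3b_1+6)+(4b_1+7)+(6b_1+12)=13b_1+25$, and reaching the required $12b_1+24$ would force the small element to equal $2b_1+5=b_2-b_1$, which is excluded. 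Hence no admissible $A_3$ exists, and the ``technical heart'' you defer cannot be carried out by any choice of generators.

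The paper's actual proof is structurally different at exactly this point: it replaces the symmetric invariant by $P(A_k)=[0,\,b_k+b_{k-1}]\setminus\{b_1,\ldots,b_k,\;b_k+b_{k-1}-b_i:\ i=1,\ldots,k-2\}$, so the partial sets only reach $\sigma(A_k)=b_k+b_{k-1}$ rather than $2b_k$ and carry only $k-2$ (not $k-1$) exclusions near the top; the exact recurrence $b_{k+1}=3b_k+4b_{k-1}$ is then what allows the three new generators $b_k+b_{k-1}-b_{k-2}$, $b_k+2b_{k-1}-b_{k-2}$, $b_k+2b_{k-2}$ to propagate this weaker invariant. Your observation that $3b_k+4b_{k-1}>3b_k-b_{k-2}$ does not rescue the plan, because Theorem E's inductive step consumes its own invariant at stage $s$, which you cannot establish at $s=3$. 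Repairing your argument requires discovering and verifying a different invariant (essentially the paper's Fact I); that is the real content of the proof, not a routine adaptation of Chen and Wu's template.
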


\noindent{\bf Remark 1} Theorem \ref{thm0} shows that the answer to Problem 1 is negative for $m=3$.

 \begin{thm}\label{thm1} Let $B=\{3\leq b_1<b_2<\cdots\}$ be a sequence of integers. If $b_2=3b_1+3$ or $b_2=3b_1+2$, then there is no sequence of positive integers $A=\{a_1<a_2<\cdots\}$ such that
 $P(A)=\mathbb{N}\setminus B$.
\end{thm}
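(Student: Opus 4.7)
The plan is to assume for contradiction that such $A = \{a_1 < a_2 < \cdots\}$ exists with $P(A) = \mathbb{N}\setminus B$, and then to track the initial elements of $A$ below $b_2$ until $b_2$ itself is forced into $P(A)$. Since Theorem D already covers $b_1 \in \{3, 5, 6, 9, 10\}$, I may restrict to $b_1 \in \{4, 7, 8\} \cup \{b : b \geq 11\}$, for which the initial segment analysis actually produces a valid starting set.

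First, the condition $\{1, \ldots, b_1 - 1\} \subseteq P(A)$ together with $b_1 \notin P(A)$ forces $A' := A \cap [1, b_1 - 1]$ to satisfy $\sum A' = b_1 - 1$ and $P(A') = [0, b_1 - 1]$. Since $b_1 + 1 \in P(A)$ cannot be produced from $A'$ alone, we get $b_1 + 1 \in A$, yielding $P(A \cap [1, b_1 + 1]) = [0, 2b_1] \setminus \{b_1\}$. Next, because $2b_1 + 1 \in \mathbb{N}\setminus B$, the element $c$ of $A$ immediately following $b_1 + 1$ must lie in $[b_1 + 2, 2b_1 + 1]$; writing $c = b_1 + 1 + t$ with $t \in \{1, \ldots, b_1\}$, a direct union computation gives
\[ P(A \cap [1, c]) \;=\; [0,\, 3b_1 + 1 + t] \setminus \{b_1,\; 2b_1 + 1 + t\}. \]
If $b_2 = 3b_1 + 2$, the equation $b_2 = 2b_1 + 1 + t$ would require $t = b_1 + 1 > b_1$, so $b_2$ lies in $P(A \cap [1, c]) \subseteq P(A)$, contradicting $b_2 \in B$. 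If $b_2 = 3b_1 + 3$, the same check eliminates every $t \geq 2$, reducing us to $t = 1$, i.e.\ $c = b_1 + 2$ and $P(A \cap [1, c]) = [0, 3b_1 + 2] \setminus \{b_1, 2b_1 + 2\}$.

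The main obstacle, which I expect to be the most delicate part of the argument, is this remaining subcase $b_2 = 3b_1 + 3$ with $c = b_1 + 2$, because here $b_2$ already lies outside $P(A \cap [1, c])$ and one more element is needed. The key observation is that $2b_1 + 2 \in \mathbb{N}\setminus B$ but $2b_1 + 2 \notin P(A \cap [1, c])$, forcing the next element $e$ of $A$ to satisfy $b_1 + 2 < e \leq 2b_1 + 2$. A second union computation then yields
\[ P(A \cap [1, e]) \;=\; [0,\, e + 3b_1 + 2] \setminus \{b_1,\; e + 2b_1 + 2\}, \]
and verifying that $b_2 = 3b_1 + 3$ equals neither $b_1$ nor $e + 2b_1 + 2$ (the latter would force $e = b_1 + 1 < b_1 + 3$) for every admissible $e \in [b_1 + 3, 2b_1 + 2]$ shows $b_2 \in P(A)$, the final contradiction. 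The technical care required lies in tracking which gaps persist or get filled when two subset-sum sets of the form $[0, N]\setminus\{x, y\}$ are combined by a shift, and in confirming at each stage that the extra element has to fall in a narrow enough window for the forbidden value $b_2$ to get absorbed.
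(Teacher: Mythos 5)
Your proposal is correct and follows essentially the same route as the paper: reduce to $b_1\in\{4,7,8\}\cup\{b\ge 11\}$ via Theorem D, invoke the Chen--Fang initial-segment result to get $P(A\cap[1,b_1-1])=[0,b_1-1]$, and then force the next two or three elements of $A$ into narrow windows (exactly the content of the paper's Lemma~\ref{lem2}) so that $b_2$ lands in $P(A)$ in every case. The case split ($b_2=3b_1+2$ dispatched at the $a_{k+2}$ stage, $b_2=3b_1+3$ requiring one further element $a_{k+3}\in[b_1+3,2b_1+2]$) and the final contradictions match the paper's argument step for step.
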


\begin{thm}\label{thm2} Let $B=\{b_1<b_2<\cdots\}$ be an infinite arithmetic progression with common difference $d$ and $b_1\in\{4,7,8\}\cup \{b: b\geq 11, b\in \mathbb{N}\}$. If $b_1+2\leq d\leq 2b_1+1$, then there exists a sequence of positive integers $A=\{a_1<a_2<\cdots\}$ such that
 $P(A)=\mathbb{N}\setminus B$.
\end{thm}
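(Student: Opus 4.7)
The plan is to build $A$ as the union of a finite ``base block'' $A_0$ with $P(A_0)=[0,2b_1]\setminus\{b_1\}$ and a geometric tail $\{2^id:i\ge 0\}$ that doubles the covered range at each step. The tail will do most of the work; the hypothesis on $b_1$ is essentially what is needed to construct $A_0$.

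For the base block, I first produce $C\subseteq\{1,\ldots,b_1-1\}$ with $P(C)=[0,b_1-1]$; equivalently, after sorting, $c_1=1$, $c_{j+1}\le 1+c_1+\cdots+c_j$ for each $j$, and $c_1+\cdots+c_m=b_1-1$. Such a ``complete'' sequence exists precisely for the values $b_1\in\{4,7,8\}\cup\{b:b\ge 11\}$ in our hypothesis (the same list of admissible $b_1$ appearing in Theorems~C and~E), with explicit small cases $C=\{1,2\}$, $\{1,2,3\}$, $\{1,2,4\}$, $\{1,2,3,4\}$ for $b_1=4,7,8,11$ and an easy greedy ``binary plus remainder'' construction for larger $b_1$. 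Setting $A_0:=C\cup\{b_1+1\}$ then gives
\[
P(A_0)=P(C)\cup\bigl((b_1+1)+P(C)\bigr)=[0,b_1-1]\cup[b_1+1,2b_1]=[0,2b_1]\setminus\{b_1\},
\]
and since $d\ge b_1+2$ we have $\max A_0=b_1+1<d$, so $A:=A_0\cup\{2^id:i\ge 0\}$ is a legitimate strictly increasing sequence of positive integers.

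The core of the argument is the induction on $i\ge 0$ that
\[
P(A^{(i)})=[0,M_i]\setminus\{b_1,b_2,\ldots,b_{2^i}\},\qquad A^{(i)}:=A_0\cup\{d,2d,\ldots,2^{i-1}d\},\ M_i:=2b_1+(2^i-1)d.
\]
For the inductive step, write $\alpha=2^id$, so $P(A^{(i+1)})=P(A^{(i)})\cup(\alpha+P(A^{(i)}))$. Three observations close things up: (a) because $b_k=b_1+(k-1)d$, the shift $b_k\mapsto b_k+\alpha=b_{k+2^i}$ carries the old holes $\{b_1,\ldots,b_{2^i}\}$ onto exactly the next $2^i$ holes $\{b_{2^i+1},\ldots,b_{2^{i+1}}\}$; (b) the inequality $\alpha\le M_i+1$ rearranges exactly to $d\le 2b_1+1$, so the intervals $[0,M_i]$ and $[\alpha,\alpha+M_i]$ abut with no integer gap; (c) the inequality $d>b_1$ (from $d\ge b_1+2$) puts all old holes strictly below $\alpha$ and all new holes strictly above $M_i$, so neither side accidentally fills in a hole of the other side. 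These combine to give $P(A^{(i+1)})=[0,M_{i+1}]\setminus\{b_1,\ldots,b_{2^{i+1}}\}$ with $M_{i+1}=2b_1+(2^{i+1}-1)d$; letting $i\to\infty$ yields $P(A)=\mathbb N\setminus B$.

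The main obstacle I expect is producing $A_0$ uniformly for all admissible $b_1$, which needs a short case check on small values; once $A_0$ is fixed, the doubling induction is essentially mechanical, with the two numerical hypotheses $b_1+2\le d$ and $d\le 2b_1+1$ entering at exactly points (c) and (b) respectively.
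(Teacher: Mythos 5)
Your proposal is correct and follows essentially the same route as the paper: a base block with subset sums $[0,2b_1]\setminus\{b_1\}$ followed by the tail $d,2d,4d,\ldots$, with the induction $P(A_k)=[0,2b_1+(2^k-1)d]\setminus\{b_1,\ldots,b_{2^k}\}$ driven by exactly the two inequalities $d\le 2b_1+1$ (intervals abut) and $d>b_1$ (holes shift onto the next block without being filled). The only cosmetic difference is that you construct the base block directly, whereas the paper routes the step $a_{k+1}=b_1+1$ through its Lemma 2.1.
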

\noindent{\bf Remark 2} Theorem \ref{thm2} shows that there exists a sequence of positive integers $A=\{a_1<a_2<\cdots\}$ such that
 $P(A)=\mathbb{N}\setminus B$ for a given special common difference sequence with $2b_1+2\leq b_2\leq 3b_1+1$. Theorem \ref{thm1} and
Theorem \ref{thm2} further enrich our understanding of Burr's problem.

\section{Lemma}

\begin{lem}\label{lem2} Let $A=\{a_1<a_2<\cdots\}$ and $B=\{b_1<b_2<\cdots\}$ be two sequences of positive integers with $b_1>1$ such that $P(A)=\mathbb{N}\setminus B$. If $P(\{a_1,\ldots,a_k\})=[0,b_1-1]$ and $b_2\geq 2b_1+2$, then $a_{k+1}=b_1+1$, $a_{k+2}\leq 2b_1+1$ and
$$P(\{a_1,\ldots,a_{k+1}\})=[0, 2b_1]\setminus \{b_1\},$$
$$P(\{a_1,\ldots,a_{k+2}\})=[0, a_{k+2}+2b_1] \setminus \{b_1, a_{k+2}+b_1\}.$$
\end{lem}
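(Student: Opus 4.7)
The plan is to pin down $a_{k+1}$ and $a_{k+2}$ by squeezing them between an upper bound forced by ``what must be representable'' and a lower bound forced by ``what must not be representable.''

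First I would determine $a_{k+1}$. Since $P(\{a_1,\dots,a_k\}) = [0,b_1-1]$ is an interval, adding any new summand $a_{k+1}$ with $a_{k+1}\le b_1-1$ would produce $[0, a_{k+1}+b_1-1]\supseteq[0,b_1]$, forcing $b_1\in P(A)$, contradicting $b_1\in B$. Hence $a_{k+1}\ge b_1$, and since $a_{k+1}\in P(A)$ but $b_1\notin P(A)$, we get $a_{k+1}\ge b_1+1$. On the other hand, the hypothesis $b_2\ge 2b_1+2$ gives $b_1+1<b_2$ and $b_1+1\ne b_1$, so $b_1+1\in\mathbb{N}\setminus B = P(A)$; any representation of $b_1+1$ using elements of $A$ must involve some $a_j$ with $j\ge k+1$ (otherwise $b_1+1\in[0,b_1-1]$), so $a_{k+1}\le b_1+1$. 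This pins $a_{k+1}=b_1+1$, and from the formula $P(\{a_1,\dots,a_{k+1}\}) = P(\{a_1,\dots,a_k\})\cup(a_{k+1}+P(\{a_1,\dots,a_k\}))$ we immediately read off
\[
P(\{a_1,\dots,a_{k+1}\}) = [0,b_1-1]\cup[b_1+1,2b_1] = [0,2b_1]\setminus\{b_1\}.
\]

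Next I would bound $a_{k+2}$. The hypothesis $b_2\ge 2b_1+2$ also gives $2b_1+1<b_2$, so $2b_1+1\notin B$ and hence $2b_1+1\in P(A)$. Since $2b_1+1\notin P(\{a_1,\dots,a_{k+1}\})=[0,2b_1]\setminus\{b_1\}$, any representation must use some $a_j$ with $j\ge k+2$, giving $a_{k+2}\le 2b_1+1$.

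Finally I would compute $P(\{a_1,\dots,a_{k+2}\})$ as the union
\[
\bigl([0,2b_1]\setminus\{b_1\}\bigr)\cup\bigl([a_{k+2},a_{k+2}+2b_1]\setminus\{a_{k+2}+b_1\}\bigr).
\]
Because $a_{k+2}\le 2b_1+1$ the two intervals $[0,2b_1]$ and $[a_{k+2},a_{k+2}+2b_1]$ tile $[0,a_{k+2}+2b_1]$. Since $a_{k+2}>a_{k+1}=b_1+1$, one checks that $b_1<a_{k+2}$ (so $b_1$ is not covered by the shifted piece) and $a_{k+2}+b_1>2b_1$ (so $a_{k+2}+b_1$ is not covered by the unshifted piece). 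Therefore the two excluded points survive in the union and we obtain exactly $[0,a_{k+2}+2b_1]\setminus\{b_1,a_{k+2}+b_1\}$, as required.

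There is no real obstacle here; the statement is essentially bookkeeping. The only subtlety to watch is ensuring that the inequalities $2b_1+1<b_2$ and $a_{k+2}>b_1+1$ are invoked correctly, so that the ``missing'' values $b_1$ and $a_{k+2}+b_1$ are not inadvertently filled in by the other interval in the union.
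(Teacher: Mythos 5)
Your proposal is correct and follows essentially the same route as the paper: pin down $a_{k+1}=b_1+1$ from $b_1\notin P(A)$ and $b_1+1\in P(A)$, bound $a_{k+2}$ by noting $2b_1+1\in P(A)$ (the paper phrases this as a contradiction with $b_2=2b_1+1$), and then compute the two unions of shifted intervals. You merely spell out in more detail the steps the paper leaves implicit, such as why the excluded points $b_1$ and $a_{k+2}+b_1$ are not filled in by the other interval.
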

\begin{proof} Since $$P(\{a_1,\ldots,a_k\})=[0, b_1-1],$$
$b_1+1\in P(A)$ and $b_1\not\in P(A)$, we have $a_{k+1}=b_1+1$.
Hence $$P(\{a_1,\ldots,a_{k+1}\})=[0, b_1-1]\cup [a_{k+1},a_{k+1}+b_1-1]=[0, 2b_1]\setminus \{b_1\},$$
and $$a_{k+2}+P(\{a_1,\ldots,a_{k+1}\})=[a_{k+2}, a_{k+2}+2b_1]\setminus \{a_{k+2}+b_1\}.$$

If $a_{k+2}\geq 2b_1+2$, then $2b_1+1\not\in P(A)$ and $b_2=2b_1+1$, which contradicts with $b_2\geq 2b_1+2$. So $a_{k+2}\leq 2b_1+1$ and $$P(\{a_1,\ldots,a_{k+2}\})=[0, a_{k+2}+2b_1] \setminus \{b_1, a_{k+2}+b_1\}.$$

This completes the proof of Lemma \ref{lem2}.
\end{proof}

\section{Proof of Theorem \ref{thm0}}
First, we shall prove that the following result:

\noindent{\bf Fact I} {\it There exists a set sequence $\{A_k\}_{k=3}^{\infty}$ such that

\noindent(i) $A_k=A_{k-1}\cup \{b_{k-1}+2b_{k-3}, b_{k-1}+b_{k-2}-b_{k-3}, b_{k-1}+2b_{k-2}-b_{k-3}\}$ for $k\geq 4$;

\noindent(ii) $P(A_k)=[0, b_k+b_{k-1}]\setminus \{b_1,\ldots,b_k, b_k+b_{k-1}-b_i: \; i=1,\ldots,k-2\}$ for $k\geq 4$;

\noindent(iii) $b_{k}=3b_{k-1}+4b_{k-2}$ for $k\geq 4$.}
\vskip 3mm
By the proof of [2, Theorem 1], there exists $A_1=\{a_1<a_2<\ldots<a_k\}\subseteq [1,b_1-1]$ such that
 $$P(A_1)=[0,b_1-1].$$
By Lemma \ref{lem2}, we have $a_{k+1}=b_1+1$ and $$P(A_1\cup \{b_1+1\})=[0, 2b_1]\setminus \{b_1\}.$$
Let $a_{k+2}=b_1+2,\; a_{k+3}=b_1+3.$ Then $$P(\{a_1,\ldots,a_{k+3}\})=[0, b_1+b_2] \setminus \{b_1, b_2\}.$$
Let $a_{k+4}=b_1+b_2$, $a_{k+5}=2b_2-2b_1+2$. Then
$$b_1+b_2+P(\{a_1,\ldots,a_{k+3}\})=[b_1+b_2,2b_1+2b_2]\backslash \{2b_1+b_2, b_1+2b_2\},$$
thus by $b_3=3b_2+2$, we have
$$P(\{a_1,\ldots,a_{k+4}\})=[0,2b_1+2b_2]\backslash \{b_1,b_2,2b_1+b_2, b_1+2b_2\},$$
$$a_{k+5}+P(\{a_1,\ldots,a_{k+4}\})=[2b_2-2b_1+2,b_3+b_2]\setminus \mathcal{B}_0,$$ where $\mathcal{B}_0=\{2b_2-b_1+2,3b_2-2b_1+2,b_3, b_3+b_2-b_1\}.$

Write
$$A_3=A_1\cup\{b_1+1, b_1+2,b_1+3\}\cup \{b_1+b_2, 2b_2-2b_1+2\}.$$
Since $$2b_2-2b_1+2<2b_1+b_2<2b_2-b_1+2<b_1+2b_2<3b_2-2b_1+2<2b_1+2b_2,$$
 we have$$P(A_3)=[0, b_3+b_2] \setminus \{b_1, b_2,b_3,b_3+b_2-b_1\}.$$

Noting that
$$\max A_3=2b_2-2b_1+2<b_3+2b_1<b_3+b_2-b_1<b_3+2b_2-b_1,$$and
$$ b_3+b_2-b_1+P(A_3)=[b_3+b_2-b_1, 2b_3+2b_2-b_1]\setminus \mathcal{B}_{3,1},$$
where
$$\mathcal{B}_{3,1}=\Big\{b_3+b_2, b_3+2b_2-b_1,
2b_3+b_2-b_1,2b_3+2b_2-2b_1\Big\}.$$
Thus
$$P(A_3\cup\{b_3+b_2-b_1\})=[0,2b_3+2b_2-b_1]\setminus \mathcal{B}_{3,2},$$
where $$\mathcal{B}_{3,2}=\Big\{b_1,b_2, b_3, b_3+2b_2-b_1,
2b_3+b_2-b_1,2b_3+2b_2-2b_1\Big\}.$$

Similarly, we have $$P(A_3\cup\{b_3+b_2-b_1, b_3+2b_2-b_1\})=[0,3b_3+4b_2-2b_1]\setminus \mathcal{B}_{3,4},$$
where $$\mathcal{B}_{3,4}=\Big\{b_1,b_2, b_3, 2b_3+4b_2-2b_1,
3b_3+3b_2-2b_1,3b_3+4b_2-3b_1\Big\}.$$

Noting that
$$b_3+2b_1+P(A_3\cup\{b_3+b_2-b_1,b_3+2b_2-b_1\})=[b_3+2b_1, 4b_3+4b_2]\setminus \mathcal{B}_{3,5},$$
where $$\mathcal{B}_{3,5}=\Big\{b_3+3b_1, b_3+b_2+2b_1,2b_3+2b_1,
3b_3+4b_2,4b_3+3b_2, 4b_3+4b_2-b_1\Big\}.$$
Let  \begin{equation}\label{eq1}b_4=3b_3+4b_2,\end{equation}
\begin{equation}\label{eq2}A_4=A_3\cup \{b_3+b_2-b_1,b_3+2b_2-b_1,b_3+2b_1\}.\end{equation}
Then \begin{equation}\label{eq3}P(A_4)=[0, b_4+b_3]\setminus \{b_1,b_2, b_3,b_4, b_4+b_3-b_2, b_4+b_3-b_1\}.\end{equation}

By (\ref{eq1}), (\ref{eq2}), (\ref{eq3}), we know that Fact I is true for $k=4$. Suppose that Fact I is true for $k(\geq 4)$. Now we consider the case $k+1$.

Since $$A_{k}=A_{k-1}\cup \{b_{k-1}+b_{k-2}-b_{k-3},b_{k-1}+2b_{k-2}-b_{k-3},b_{k-1}+2b_{k-3}\},$$
$$P(A_k)=[0, b_k+b_{k-1}]\setminus \{b_1,\ldots,b_k, b_k+b_{k-1}-b_i: \; i=1,\ldots,k-2\},$$
and
$$\max A_k=b_{k-1}+2b_{k-2}-b_{k-3}<b_k+2b_{k-2}<b_k+b_{k-1}-b_{k-2}<b_k+2b_{k-1}-b_{k-2},$$
we have
$$b_k+b_{k-1}-b_{k-2}+P(A_k)=[b_k+b_{k-1}-b_{k-2}, 2b_k+2b_{k-1}-b_{k-2}]
\setminus \mathcal{B}_{k,1},$$
where $$\mathcal{B}_{k,1}=\Big\{b_k+b_{k-1}-b_{k-2}+b_i,2b_k+2b_{k-1}-b_{k-2}-b_i: \; i=1,\ldots,k-1\Big\}.$$
Noting that $$\begin{array}{ll}&{\bf b_k+b_{k-1}-b_{k-2}}<b_k+b_{k-1}-b_{k-2}+b_1<\cdots<b_k+b_{k-1}-b_{k-2}+b_{k-3}\\
<&{\bf b_k+b_{k-1}-b_{k-3}}<\cdots<b_k+b_{k-1}-b_1\\
<&{\bf b_k+b_{k-1}=b_k+b_{k-1}-b_{k-2}+b_{k-2}},\end{array}$$
we have $$P(A_k\cup \{b_k+b_{k-1}-b_{k-2}\})=[0, 2b_k+2b_{k-1}-b_{k-2}]
\setminus \mathcal{B}_{k,2},$$
where $$\mathcal{B}_{k,2}=\Big\{b_1,\ldots,b_k,2b_k+2b_{k-1}-b_{k-2}-b_i: i=1,\ldots,k\Big\}.$$
Noting that $$\begin{array}{ll}&b_k+2b_{k-1}-b_{k-2}+P(A_k\cup \{b_k+b_{k-1}-b_{k-2}\})\\
=&[b_k+2b_{k-1}-b_{k-2}, 3b_k+4b_{k-1}-2b_{k-2}]
\setminus \mathcal{B}_{k,3},\end{array}$$
where $$\mathcal{B}_{k,3}=\Big\{b_k+2b_{k-1}-b_{k-2}+b_i,3b_k+4b_{k-1}-2b_{k-2}-b_i: \; i=1,\ldots,k\Big\}.$$
Noting that $$\begin{array}{ll}&{\bf b_k+2b_{k-1}-b_{k-2}}<b_k+2b_{k-1}-b_{k-2}+b_1<\cdots<b_k+3b_{k-1}-b_{k-2}\\
<&{\bf 2b_k+b_{k-1}-b_{k-2}}<\cdots<2b_k+2b_{k-1}-b_{k-2}-b_1\\
<&{\bf 2b_k+2b_{k-1}-b_{k-2}},\end{array}$$
we have $$\begin{array}{ll}&P(A_k\cup \{b_k+b_{k-1}-b_{k-2},b_k+2b_{k-1}-b_{k-2}\})\\=&[0, 3b_k+4b_{k-1}-2b_{k-2}]
\setminus \mathcal{B}_{k,4},\end{array}$$
where $$\mathcal{B}_{k,4}=\Big\{b_1,\ldots, b_k, 3b_k+4b_{k-1}-2b_{k-2}-b_i:  i=1,\ldots,k\Big\}.$$
Noting that $$\begin{array}{ll}&b_k+2b_{k-2}+P(A_k\cup \{b_k+b_{k-1}-b_{k-2},b_k+2b_{k-1}-b_{k-2}\})\\
=&[b_k+2b_{k-2}, 4b_k+4b_{k-1}]
\setminus \mathcal{B}_{k,5},\end{array}$$
where $$\mathcal{B}_{k,5}=\Big\{b_k+2b_{k-2}+b_i,4b_k+4b_{k-1}-b_i: \; i=1,\ldots,k\Big\}.$$
Noting that $$\begin{array}{ll}&{\bf b_k+2b_{k-2}}<b_k+2b_{k-2}+b_1<\cdots<2b_k+2b_{k-2}\\
<&{\bf 2b_k+4b_{k-1}-2b_{k-2}}<\cdots<{\bf 3b_k+4b_{k-1}-2b_{k-2}-b_1},\end{array}$$
we have $$P(A_k\cup \{b_k+b_{k-1}-b_{k-2},b_k+2b_{k-1}-b_{k-2},b_k+2b_{k-2}\})=[0, 4b_k+4b_{k-1}]
\setminus \mathcal{B}_{k,6},$$
where $$\mathcal{B}_{k,6}=\Big\{b_1,\ldots, b_k, 4b_k+4b_{k-1}-b_i: \; i=1,\ldots,k\Big\}.$$

Write $$b_{k+1}=3b_k+4b_{k-1},$$ $$A_{k+1}=A_{k}\cup \{b_k+b_{k-1}-b_{k-2},b_k+2b_{k-1}-b_{k-2},b_k+2b_{k-2}\},$$
we have $$P(A_{k+1})=[0,b_{k+1}+b_k]\setminus \{b_1,\ldots,b_{k+1},b_{k+1}+b_k-b_i: \; i=1,\ldots,k-1\}.$$
Second, let $$A=\bigcup_{k=4}^{\infty}A_k.$$ If $n\in P(A)$, let $n<b_k+2b_{k-2}$, then noting that
$$A\setminus A_i\subseteq [b_{k}+2b_{k-2},+\infty)$$
for all $i\geq k$,
we have $n\in P(A_{k})$.

By Fact I (ii) we have \begin{equation}\label{eq4}n\not\in \{b_1,\ldots,b_k,b_k+b_{k-1}-b_i: \; i=1,\ldots,k-2\}.\end{equation}
If $n\leq b_k$, then by (\ref{eq4}) we have $n\not\in B$.
If $b_k<n<b_k+2b_{k-2}$, then by $b_k<n<b_{k+1}$, we have $n\not\in B$.
 That is, $n\in \mathbb{N}\setminus B$.

Conversely, if $n'\in \mathbb{N}\setminus B$, then $n'\not\in B$, let $n'<b_{k'}$, we have $$n'\not\in \{b_1,\ldots,b_{k'}, b_{k'}+b_{k'-1}-b_i: \; i=1,\ldots,k'-2\}.$$ By Fact I (ii) we have $n'\in P(A_{k'})$. So $n'\in P(A)$.

Hence $P(A)=\mathbb{N}\setminus B$.

This completes the proof of Theorem \ref{thm0}.

\section{Proof of Theorem \ref{thm1}}
By Theorem D, we know that if $b_1\in \{3,5,6,9,10\}$, then there is no sequence of positive integers $A=\{a_1<a_2<\cdots\}$ for which
 $P(A)=\mathbb{N}\setminus B$.
 Now, it is sufficient to consider positive integers sequence $B=\{1<b_1<b_2<\cdots\}$ with $b_1\in\{4,7,8\}\cup \{b: b\geq 11, b\in \mathbb{N}\}$.

 Assume that there exists a sequence $A=\{a_1<a_2<\cdots\}$ of positive integers such that
 $P(A)=\mathbb{N}\setminus B$. By the proof of [2, Theorem 1], there exists $A_1=\{a_1<a_2<\ldots<a_k\}\subseteq [1,b_1-1]$ such that
 $$P(A_1)=[0,b_1-1].$$
By Lemma \ref{lem2}, we have
$$P(\{a_1,\ldots,a_{k+2}\})=[0, a_{k+2}+2b_1] \setminus \{b_1, a_{k+2}+b_1\}.$$
 We divide into two cases:

{\bf Case 1.} $b_2=3b_1+3$. If $a_{k+2}\geq b_1+3$, then $b_2\in [0, a_{k+2}+2b_1]$. Since $b_2\not\in P(\{a_1,\ldots,a_{k+2}\})$, we have
$b_2=a_{k+2}+b_1$. Thus $$a_{k+2}=b_2-b_1=2b_1+3>2b_1+1.$$
By Lemma \ref{lem2}, it is impossible.
Thus $a_{k+2}=b_1+2$ and $$P(\{a_1,\ldots,a_{k+2}\})=[0,3b_1+2]\setminus \{b_1,2b_1+2\}.$$
Hence
$$a_{k+3}+P(\{a_1,\ldots,a_{k+2}\})=[a_{k+3}, a_{k+3}+3b_1+2]\setminus \{a_{k+3}+b_1, a_{k+3}+2b_1+2\}.$$

If $a_{k+3}\geq 2b_1+3$, then $2b_1+2\not\in P(A)$, thus $b_2=2b_1+2$, a contradiction.
Hence $a_{k+3}\leq 2b_1+2$.

Since $a_{k+3}>a_{k+2}$, we have $a_{k+3}\geq b_1+3$, thus $b_1+a_{k+3}\neq 2b_1+2$ and
$$P(\{a_1,\ldots,a_{k+3}\})=[0, a_{k+3}+3b_1+2]\setminus \{b_1, a_{k+3}+2b_1+2\}.$$
Since $b_2=3b_1+3\in [0, a_{k+3}+3b_1+2]$ and $b_2\not\in P(\{a_1,\ldots,a_{k+3}\})$, we have $$b_2=3b_1+3=a_{k+3}+2b_1+2\geq 3b_1+5,$$ a contradiction.

{\bf Case 2.} $b_2=3b_1+2$. Since $a_{k+2}\geq b_1+2$, then $b_2\in [0, a_{k+2}+2b_1]$. Since $b_2\not\in P(\{a_1,\ldots,a_{k+2}\})$, we have
$b_2=a_{k+2}+b_1$. Thus $$a_{k+2}=b_2-b_1=2b_1+2>2b_1+1.$$ By Lemma \ref{lem2}, it is impossible.

This completes the proof of Theorem \ref{thm1}.

\section{Proof of Theorem \ref{thm2}}

First, we shall prove that the following result:

\noindent{\bf Fact II} {\it There exists a set sequence $\{A_k\}_{k=2}^{\infty}$ such that

\noindent(i) $A_2\subseteq A_3\subseteq\ldots$;

\noindent(ii) $P(A_k)=[0, 2b_1+(2^{k}-1)d]\setminus \{b_i: \; i=1,\ldots,2^k\}$.}
\vskip 3mm
For $b_1\in \{4,7,8\}\cup \{b: b\geq 11\}$, by the proof of [2, Theorem 1], there exists $A_1=\{a_1<a_2<\ldots<a_k\}\subseteq [1,b_1-1]$ such that
 $$P(A_1)=[0,b_1-1].$$
By Lemma \ref{lem2}, we have $a_{k+2}\leq 2b_1+1$ and
$$P(A_1\cup \{b_1+1\})=[0, 2b_1]\setminus \{b_1\}.$$

Since $a_{k+2}\leq 2b_1+1$ and $b_1+2\leq d\leq 2b_1+1$, we can choose $a_{k+2}=d$, thus
$$P(A_1\cup \{b_1+1,d\})=[0, 2b_1+d]\setminus\{b_1,b_1+d\}.$$
Choose $a_{k+3}=2d$, then
$$a_{k+3}+P(\{a_1,\ldots,a_{k+2}\})=[2d, 2b_1+3d]\setminus \{b_1+2d,b_1+3d\}.$$
Write $$A_2=A_1\cup \{b_1+1,d,2d\}.$$
Since $d\geq b_1+2>b_1$, we have $b_1+2d>2b_1+d$, thus
$$P(A_2)=[0, 2b_1+3d]\setminus \{b_1,b_2,b_3,b_4\}.$$

We have proved that Fact II (ii) is true for $k=2$. Suppose that Fact II is true for $k(\geq 2)$. Now we consider the case $k+1$.

Since $$A_k=A_1\cup \{b_1+1,d, \ldots, 2^{k-1}d\},$$
$$P(A_k)=[0, 2b_1+(2^{k}-1)d]\setminus \{b_i: \; i=1,\ldots,2^k\},$$
we have
$$2^{k}d+P(A_k)=[2^{k}d, 2b_1+(2^{k+1}-1)d]\setminus \{b_i+2^kd: \; i=1,\ldots,2^{k}\}.$$
Write $$A_{k+1}=A_{k}\cup \{2^{k}d\}.$$
Since $$b_i+2^{k}d=b_{2^k+i}, \quad i=1,\ldots,2^k,$$
 we have $$P(A_{k+1})=[0, 2b_1+(2^{k+1}-1)d]\setminus \{b_i: \; i=1,\ldots,2^{k+1}\}.$$

Second, put $$A=\bigcup_{k=2}^{\infty}A_k.$$
If $n\in P(A)$, let $n\leq 2^{k-1}d$, then, by
$$A\setminus A_i\subseteq [2^{k-1}d+1,+\infty)$$ for all $i\geq k$,
we have $n\in P(A_{k})$. By Fact II (ii) we have $n\not\in \{b_1,\ldots,b_{2^k}\}$. Since $n\leq 2^{k-1}d<b_{2^k}$, we have $n\not\in B$. That is, $n\in \mathbb{N}\setminus B$.

Conversely, if $n'\in \mathbb{N}\setminus B$, then $n'\not\in B$, let $n'<b_{2^{k'}}$, we have $n'\not\in \{b_1,\ldots,b_{2^{k'}}\}$. By Fact II (ii) we have $n'\in P(A_{k'})$. So $n'\in P(A)$.

Hence $P(A)=\mathbb{N}\setminus B$.

This completes the proof of Theorem \ref{thm2}.

\end{document}